\DeclareMathOperator{\GCD}{GCD}
\DeclareMathOperator{\LCM}{LCM}
\newcommand{\FF}{\mathbb{F}}
\newcommand{\ZZ}{\mathbb{Z}}
\newcommand{\cA}{\mathcal{A}}
\newcommand{\cK}{\mathcal{K}}
\newcommand{\cP}{\mathcal{P}}
\newcommand{\cR}{\mathcal{R}}
\newcommand{\be}{\boldsymbol{e}}
\renewcommand{\bf}{\boldsymbol{f}}
\newcommand{\bg}{\boldsymbol{g}}
\newcommand{\bp}{\boldsymbol{p}}
\newcommand{\wcK}{\widehat{\cK}}
\theoremstyle{plain}
\newtheorem{theorem}{Theorem}
\numberwithin{theorem}{section}
\newtheorem{prop}[theorem]{Proposition}
\newtheorem{cor}[theorem]{Corollary}
\newtheorem{lem}[theorem]{Lemma}
\newtheorem{Definition/Theorem}[theorem]{Definition/Theorem}
\theoremstyle{remark}
\newtheorem{remark}[theorem]{Remark}
\newtheorem{Corollary/Definition}[theorem]{Corollary/Definition}
\title{Some sums over irreducible polynomials}
\author{David E Speyer}
\date{}
\begin{document}

\begin{abstract}
We prove a number of conjectures due to Dinesh Thakur concerning sums of the form $\sum_P h(P)$ where the sum is over monic irreducible polynomials $P$ in $\mathbb{F}_q[T]$, the function $h$ is a rational function and the sum is considered in the $T^{-1}$-adic topology. As an example  of our results, in $\FF_2[T]$, the sum $\sum_P \tfrac{1}{P^k - 1}$ always converges to a rational function, and is $0$ for $k=1$.
\end{abstract}

\maketitle{}

\section{Introduction}

Our goal is to explain some identities experimentally discovered by Dinesh Thakur, involving sums over irreducible polynomials in finite fields. We begin by stating the simplest of these identities: Let $\cP$ be the set of irreducible polynomials in $\FF_2[T]$. Then
\[ \sum_{P \in \cP} \frac{1}{P-1}  = 0 . \]
Here the sum must be interpreted as a sum of power series in $T^{-1}$. For example, the first five summands are 
\[\begin{array}{lcc@{}c@{}c@{}c@{}c@{}l}
\frac{1}{T-1} &=& T^{-1} &+& T^{-2} &+& T^{-3} &+ \cdots \\
\frac{1}{(T+1)-1} &=& T^{-1} & & & & & \\
\frac{1}{(T^2+T+1)-1} &=&  & & T^{-2} &+& T^{-3} &+ \cdots \\
\frac{1}{(T^3+T+1)-1} &=&  & &  && T^{-3} &+ \cdots \\
\frac{1}{(T^3+T^2+1)-1} &=&  & &  && T^{-3} &+ \cdots . \\
\end{array}\]
As the reader can see, only finitely many terms contribute to the coefficient of each power of $T^{-1}$, and the coefficient of $T^{-j}$ is $0$ for each $j$.

We now introduce the notation necessary to state our general results. To aid the reader's comprehension, we adopt the following conventions: Integers will always be denoted by lower case Roman letters ($k$, $p$, $q$ \dots); polynomials over finite fields will always be denoted by capital Roman letters ($A$, $F$, $P$ \dots), sets of such polynomials will always be denoted by calligraphic letters ($\cA$, $\cP$, $\cR$, \dots), symmetric polynomials will be denoted by bold letters ($\be_k$, $\bp_k$, \dots).
Of course, there will be other sorts of mathematical objects as well, which we trust the reader to accommodate as they occur.

Let $p$ be a prime and $q$ a power of $p$. Let $\FF_q$ be the field with $q$ elements. Let $\cR$ be the polynomial ring $\FF_q[T]$.
Let $\cK$ be the fraction field $\FF_q(T)$ and let $\wcK$ be the $T^{-1}$-adic completion of $\cK$.
All infinite sums will be understood in the $T^{-1}$-adic topology.

Let $\cP$ be the set of irreducible polynomials in $\cR$; let $\cP_1$ be the set of monic irreducible polynomials.
Here is our main result for the case $p=2$.

\begin{theorem} \label{main2}
If $p=2$ then, for any positive integer $k \equiv 0 \bmod q-1$, the sum
\[ \sum_{P \in \cP_1} \frac{1}{P^{k}-1} \]
is in $\cK$. 
\end{theorem}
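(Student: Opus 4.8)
The plan is to reduce the prime sum to ``twisted'' sums of the shape $\sum_{P \in \cP_1}(P-\zeta)^{-1}$ and then to pin those down by playing them off against sums over \emph{all} monic polynomials.

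First I would dispose of the $2$-part of $k$. Write $k = 2^a k_0$ with $k_0$ odd; since $q-1$ is odd, the hypothesis $q-1\mid k$ forces $q-1\mid k_0$. In characteristic $2$ we have $P^k-1 = (P^{k_0}-1)^{2^a}$, and the Frobenius $x\mapsto x^{2^a}$ is additive on $\wcK$ and multiplies valuations by $2^a$, hence is continuous and commutes with convergent sums:
\[ \sum_{P\in\cP_1}\frac{1}{P^k-1} \;=\; \Bigl(\sum_{P\in\cP_1}\frac{1}{P^{k_0}-1}\Bigr)^{2^a}. \]
So we may assume $k$ odd. Then $x^k-1$ has $k$ distinct roots in $\overline{\FF_q}$, the $k$-th roots of unity, and partial fractions give $(x^k-1)^{-1} = \sum_{\zeta^k=1}\zeta\,(x-\zeta)^{-1}$, the residue at $\zeta$ being $1/(k\zeta^{k-1}) = \zeta/k = \zeta$ since $k\equiv 1$ in characteristic $2$. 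Substituting $x = P$ and summing,
\[ \sum_{P\in\cP_1}\frac{1}{P^k-1} \;=\; \sum_{\zeta^k=1}\zeta\,S(\zeta),\qquad S(\zeta):=\sum_{P\in\cP_1}\frac{1}{P-\zeta}, \]
and each $S(\zeta)$ converges in $\overline{\FF_q}((T^{-1}))$ because $\lvert (P-\zeta)^{-1}\rvert = q^{-\deg P}\to 0$.

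The heart of the matter is to understand these twisted sums. Writing $S(\zeta) = \sum_{j\ge 1}\zeta^{j-1}\Pi(j)$ with $\Pi(j):=\sum_{P\in\cP_1}P^{-j}$, I would compare $\Pi$ with the sums over all monic polynomials via the Euler product $\sum_{F\,\mathrm{monic}} u^{\deg F}F^{-m} = \prod_{P\in\cP_1}(1-u^{\deg P}P^{-m})^{-1}$, whose logarithmic $u$-derivative gives, for every $n$,
\[ \sum_{\substack{F\,\mathrm{monic}\\ \deg F = n}}\Lambda(F)\,F^{-m} \;=\; \sum_{e\mid n} e\sum_{\substack{P\in\cP_1\\ \deg P = e}} P^{-mn/e}, \]
where $\Lambda$ is the von Mangoldt function; the left-hand side is explicitly computable by Carlitz's identity $\prod_{\deg F = d,\ \mathrm{monic}}(x-F(T)) = e_d(x-T^d)$, with $e_d(z) = \prod_{\deg G<d}(z-G)$ the $\FF_q$-linear Carlitz polynomial (so, for instance, $\sum_{\deg F = d,\ \mathrm{monic}}(F-\zeta)^{-1} = -e_d'(0)/e_d(\zeta-T^d)$ with $e_d'(0)$ the explicit linear coefficient of $e_d$). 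The triangular system above recovers $\sum_{\deg P = e}P^{-j}$ whenever $p\nmid e$; the degrees divisible by $p$ are absorbed using $\Pi(pj) = \Pi(j)^p$ (Frobenius being additive and continuous), which peels the $p$-power part off every exponent. I expect that, after this bookkeeping, $\sum_{\zeta^k=1}\zeta\,S(\zeta)$ will be exhibited as an explicit element of $\overline{\FF_q}(T)$, with the hypothesis $q-1\mid k$ being exactly what forces the transcendental ``Carlitz-period'' contributions — those coming from the non-rational generating series $\sum_d (-u)^d/L_d$ (with $L_d$ the Carlitz factorial) — to cancel in the sum over $\zeta$; the cleanest route may well be an induction on the prime-to-$p$ part of $k$, with base case $k = q-1$, showing that the defect from rationality of $\sum_{k\mid j}\Pi(j)$ is itself a $p$-th power of the defect for a proper divisor.

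Finally, a Galois descent concludes: $\sum_{\zeta^k=1}\zeta\,S(\zeta) = \sum_{P\in\cP_1}(P^k-1)^{-1}$ is a convergent sum of elements of $\cK = \FF_q(T)$, so all of its $T^{-1}$-adic coefficients lie in $\FF_q$; a rational function over $\overline{\FF_q}$ whose Laurent expansion at $T=\infty$ has coefficients in $\FF_q$ is fixed by $\mathrm{Gal}(\overline{\FF_q}/\FF_q)$, hence lies in $\FF_q(T) = \cK$. The main obstacle is the middle paragraph: the characteristic-$p$ bookkeeping required to determine exactly which combinations of prime sums are rational, and in particular to verify that the Carlitz-period contributions cancel out of $\sum_{\zeta^k=1}\zeta\,S(\zeta)$ precisely under the hypothesis $q-1\mid k$.
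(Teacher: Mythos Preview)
Your proposal is not a proof but a programme, and you yourself flag the gap: the entire middle paragraph is conjectural (``I expect that\dots'', ``may well be\dots''). Concretely, you never establish the mechanism by which $q-1\mid k$ forces the Carlitz-period contributions to cancel, and your device for handling primes of degree divisible by $p$ is suspect. The Frobenius identity $\Pi(pj)=\Pi(j)^p$ relates prime sums at different \emph{exponents}, whereas what the von Mangoldt triangular system loses is information about primes of $p$-divisible \emph{degree} at a fixed exponent; these are not the same thing, and your ``peeling off the $p$-power part'' does not obviously recover the missing data. Moreover, even granting you could compute each $\Pi(j)$ in terms of Goss zeta values, those values individually live in $\overline{\pi}^{\,j}\cK$ for $q-1\mid j$, and you give no argument for why the particular combination $\sum_{m\ge 1}\Pi(mk)$ lands in $\cK$ itself.

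The paper's route avoids all of this by never attempting to isolate the prime sums directly. The key idea is to introduce the integral symmetric polynomial $\bg_p(X_1,X_2,\dots)=\tfrac{1}{p}\bigl((\sum X_i)^p-\sum X_i^p\bigr)$ and evaluate it at $\{1/A^k\}_{A\in\cA_1}$, the set of \emph{all} monic polynomials. On one hand, by the Carlitz product formula (Corollary~\ref{monicKey}), both $\bg_p(1/A^k)_{A\in\cA_1}$ and $\sum_{A\in\cA_1}1/A^{pk}$ lie in $\overline{\pi}^{\,pk}\cK$, so their ratio lies in $\cK$. On the other hand, a short sieve argument (factor out the gcd, then use that the counting function $\psi$ is multiplicative and divisible by $p^2$ whenever two distinct primes appear) shows this ratio equals exactly $\sum_{P\in\cP_1}G_p(1/P^k)$, which for $p=2$ is $\sum_{P\in\cP_1}1/(P^k-1)$. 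The division by $p$ happens once, in $\ZZ$, inside the definition of $\bg_p$; no logarithms, no M\"obius inversion in characteristic $p$, and no tracking of individual $S(\zeta)$'s are needed.
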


The reader may wonder what happens is we sum over all irreducible polynomials rather than monic ones; that is an easy corollary:
\begin{cor} \label{notMonic}
For any positive integer $k$, the sum
\[ \sum_{P \in \cP} \frac{1}{P^{k}-1} \]
is in $\cK$. 
\end{cor}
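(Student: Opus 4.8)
\noindent\textbf{Proof proposal for Corollary~\ref{notMonic}.}

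The plan is to reduce the sum over all of $\cP$ to a sum over the monic irreducibles $\cP_1$ by collecting terms according to leading coefficient, and then to invoke Theorem~\ref{main2}. Every $P \in \cP$ factors uniquely as $P = cP_1$ with $c \in \FF_q^\times$ and $P_1 \in \cP_1$, every such product is irreducible, and $\deg(cP_1) = \deg P_1$. Since $\tfrac{1}{P^k-1}$ lies in $T^{-k\deg P}\FF_q[[T^{-1}]]$, only finitely many $P$ contribute to the coefficient of any fixed power of $T^{-1}$ (exactly as in the discussion in the introduction), so the sum is well defined in $\wcK$ and, $\wcK$ being complete, we may freely collect it into the finite blocks indexed by a common monic part. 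This gives
\[
\sum_{P \in \cP} \frac{1}{P^k-1} \;=\; \sum_{P_1 \in \cP_1}\ \sum_{c \in \FF_q^\times} \frac{1}{c^k P_1^k - 1}.
\]

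Next I would evaluate the inner sum by an elementary roots-of-unity computation. Put $d = \GCD(k,q-1)$ and $m = (q-1)/d$. As $c$ ranges over $\FF_q^\times$, the power $c^k$ ranges over the subgroup $(\FF_q^\times)^k = \mu_m$ of $m$-th roots of unity in $\FF_q$, each value attained exactly $d$ times. Using the identity $\sum_{\zeta \in \mu_m}\tfrac{1}{\zeta X-1} = \tfrac{m}{X^m-1}$ of rational functions in $X$ (a one-line partial-fraction, or logarithmic-derivative, computation starting from $\prod_{\zeta\in\mu_m}(X-\zeta)=X^m-1$) and specializing $X = P_1^k$, one obtains
\[
\sum_{c \in \FF_q^\times} \frac{1}{c^k P_1^k - 1} \;=\; \frac{dm}{P_1^{km}-1} \;=\; \frac{q-1}{P_1^{\ell}-1}, \qquad \ell := km = \LCM(k,q-1).
\]

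Combining the two displays yields $\displaystyle\sum_{P \in \cP}\frac{1}{P^k-1} = (q-1)\sum_{P_1 \in \cP_1}\frac{1}{P_1^{\ell}-1}$. The key point is that $\ell=\LCM(k,q-1)$ is a multiple of $q-1$ for \emph{every} positive integer $k$, so the hypothesis of Theorem~\ref{main2} is met with exponent $\ell$ when $p=2$ (and the corresponding odd-characteristic result of the paper applies otherwise), giving $\sum_{P_1}\tfrac{1}{P_1^{\ell}-1}\in\cK$; multiplying by the constant $q-1$ keeps us in $\cK$.

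As for obstacles: there really are none beyond bookkeeping. The one place warranting a moment's care is the legitimacy of regrouping the infinite sum, but this is immediate here since $\wcK$ is complete and the regrouping is into finite blocks of terms of equal degree; and the roots-of-unity identity is routine. The substantive input is imported wholesale from Theorem~\ref{main2} (together with its extension to odd $p$), and the only thing one must not lose track of is that the exponent produced by the reduction, $\LCM(k,q-1)$, is automatically divisible by $q-1$.
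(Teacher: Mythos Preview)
Your proof is correct and follows the same route as the paper: rewrite $\sum_{P\in\cP}$ as $\sum_{P_1\in\cP_1}\sum_{c\in\FF_q^\times}$, show that the inner sum collapses to a single term with exponent $\LCM(k,q-1)$, and then invoke Theorem~\ref{main2}. The only cosmetic difference is in how the inner identity is checked: the paper expands $\tfrac{1}{(aX)^k-1}$ as a geometric series in $X^{-1}$ and uses $\sum_{a\in\FF_q^\times}a^m=0$ unless $(q-1)\mid m$, whereas you package the same computation as the partial-fraction identity $\sum_{\zeta\in\mu_m}\tfrac{1}{\zeta X-1}=\tfrac{m}{X^m-1}$.

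One small caveat: drop the parenthetical ``and the corresponding odd-characteristic result of the paper applies otherwise.'' Corollary~\ref{notMonic} sits in the $p=2$ context (immediately after Theorem~\ref{main2}, which begins ``If $p=2$''), and for odd $p$ the paper's main result, Theorem~\ref{mainOdd}, concerns $\sum_{P_1}G_p(1/P_1^{\ell})$ rather than $\sum_{P_1}\tfrac{1}{P_1^{\ell}-1}$, so it does not supply what you would need there.
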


\begin{proof}
We rewrite the sum as $\sum_{P \in \cP_1} \sum_{a \in \FF_q^{\times}} \tfrac{1}{(aP)^k-1}$.
The corollary then follows from the identity
\[ \sum_{a \in \FF_q^{\times}} \frac{1}{(aX)^k-1} = \frac{1}{X^{\LCM(k, q-1)}-1} \]
in $\FF_q(U)$. To prove this identity, write 
\[ \frac{1}{(aX)^k-1}  = \sum_{j=1}^{\infty} 1/(aX)^{kj} \]
and recall that 
\[ \sum_{a \in \FF_q^{\times}} a^m = \begin{cases} 1 & m \equiv 0 \bmod q-1 \\ 0 & \mbox{otherwise} \end{cases} . \]
\end{proof}

We now discuss the case of a general prime.
Define the rational function $G_p(U)$ by
\[ G_p(U) = \frac{(1-U^p)- (1-U)^p}{p (1-U)^p}. \]
When $p=2$, we have $G_2(U) = \tfrac{2U - 2 U^2}{2 (1-U)^2} = U/(1-U)$, so $G_2(1/P) = 1/(P-1)$.
When $p$ is odd, we have the following alternate expressions for $G_p$:

\begin{prop} \label{GAltForms}
If $p$ is odd, then, as rational functions in $\FF_p(U)$, we have
\[ G_p(U) = \frac{\sum_{j=1}^{p-1} \frac{U^j}{j}}{(1-U)^p} = \sum_{\substack{0 \leq j < \infty \\ j \not \equiv 0 \bmod p}} \tfrac{U^j}{j} . \]
\end{prop}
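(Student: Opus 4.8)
The plan is to unwind the definition of $G_p(U)$ and verify the two claimed equalities in turn. Recall first that the numerator $(1-U^p)-(1-U)^p$ is a polynomial in $\ZZ[U]$ all of whose coefficients are divisible by $p$ (its reduction mod $p$ vanishes because Frobenius is additive), so the formula for $G_p$ is to be read as: divide this polynomial by $p$ in $\ZZ[U]$, then reduce mod $p$, and finally invert $(1-U)^p$ in $\FF_p(U)$ — legitimate, since $1-U$ is a unit in $\FF_p[[U]]$ and in the localization of $\FF_p[U]$ at $(U)$. This is exactly the recipe already used for $p=2$ in the paragraph preceding the proposition.

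First I would establish $\frac{(1-U^p)-(1-U)^p}{p(1-U)^p} = \frac{\sum_{j=1}^{p-1} U^j/j}{(1-U)^p}$, which reduces to showing $\frac{(1-U^p)-(1-U)^p}{p} \equiv \sum_{j=1}^{p-1}\frac{U^j}{j} \pmod p$ in $\FF_p[U]$. Expanding $(1-U)^p$ by the binomial theorem over $\ZZ$ and using that $p$ is odd (so the two $U^p$ terms cancel), the left-hand polynomial equals $-\sum_{j=1}^{p-1}\frac{1}{p}\binom{p}{j}(-1)^j U^j$. Now $\frac1p\binom pj = \frac1j\binom{p-1}{j-1}$, and $\binom{p-1}{j-1}\equiv (-1)^{j-1}\pmod p$; this standard congruence follows from the identity $(1+U)^{p-1}=\frac{1+U^p}{1+U}=\sum_{i=0}^{p-1}(-1)^iU^i$ in $\FF_p[U]$ (valid for $p$ odd). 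Substituting, the coefficient of $U^j$ becomes $-\frac{(-1)^{j-1}(-1)^j}{j} = \frac1j$, which is the first equality.

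Next I would prove $\frac{\sum_{j=1}^{p-1}U^j/j}{(1-U)^p} = \sum_{j\not\equiv 0 \bmod p}\frac{U^j}{j}$, the right side being understood as a power series in $U$ (equivalently, a $T^{-1}$-adically convergent sum once $U$ is specialized to $1/P$ with $P$ of positive degree). In $\FF_p[[U]]$ we have $(1-U)^p = 1-U^p$, hence $\frac{1}{(1-U)^p} = \sum_{m\ge 0}U^{pm}$. Multiplying, $\frac{\sum_{j=1}^{p-1}U^j/j}{(1-U)^p} = \sum_{m\ge 0}\sum_{j=1}^{p-1}\frac{U^{pm+j}}{j}$; since every positive integer $n$ not divisible by $p$ is uniquely of the form $pm+j$ with $m\ge 0$ and $1\le j\le p-1$, and since $n\equiv j\pmod p$ forces $\frac1n = \frac1j$ in $\FF_p$, this reindexes to $\sum_{n\not\equiv 0 \bmod p}\frac{U^n}{n}$.

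The computation is entirely routine. The only points needing a word of justification are that the numerator polynomial $(1-U^p)-(1-U)^p$ is genuinely divisible by $p$ in $\ZZ[U]$ (so that the division defining $G_p$ makes sense) and the congruence $\binom{p-1}{j-1}\equiv(-1)^{j-1}\pmod p$; neither presents a real obstacle, so I expect the proof to be short.
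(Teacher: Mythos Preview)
Your proof is correct and follows essentially the same route as the paper: expand the numerator via the binomial theorem, reduce $\tfrac{(-1)^{j-1}}{p}\binom{p}{j}$ to $\tfrac{1}{j}$ modulo $p$, and then observe the second equality. The paper obtains the binomial-coefficient congruence by directly reducing $(p-1)(p-2)\cdots(p-j+1)$ modulo $p$ rather than via your identity $\tfrac{1}{p}\binom{p}{j}=\tfrac{1}{j}\binom{p-1}{j-1}$, and it dismisses the second equality as immediate where you spell out the geometric series for $1/(1-U^p)$; these are cosmetic differences only.
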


\begin{proof}
If $p$ is odd, then $(1-U^p)- (1-U)^p = \sum_{j=1}^{p-1} (-1)^{j-1} \binom{p}{j} U^j$. We have
\[ \frac{(-1)^{j-1}}{p} \binom{p}{j} = \frac{(-1)^{j-1} (p-1) (p-2) \cdots (p-j+1)}{1 \cdots 2 \cdots (j-1) j} \equiv \frac{1}{j} \bmod p. \]
This proves the first equality, and the second is immediate.
\end{proof}

\begin{theorem} \label{mainOdd}
For any positive integer $k \equiv 0 \bmod q-1$, the sum
\[ \sum_{P \in \cP_1} G_p(1/P^k) \]
is in $\cK$.
\end{theorem}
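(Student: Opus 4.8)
I would argue as follows: reduce the sum to a cleaner one; dispatch the case $p=2$ using Theorem~\ref{main2}; and for odd $p$ rewrite the sum over primes as a sum over \emph{all} monic polynomials, evaluate that explicitly, and extract a rational function. Using the power-series form $G_p(U)=\sum_{j\ge 1,\,p\nmid j}U^{j}/j$ of Proposition~\ref{GAltForms} and interchanging summation — legitimate because for each $N$ only finitely many pairs $(P,j)$ have $kj\deg P\le N$ — one gets $\sum_{P\in\cP_1}G_p(1/P^{k})=\sum_{j\ge 1,\,p\nmid j}j^{-1}S_{kj}$ with $S_{n}:=\sum_{P\in\cP_1}P^{-n}$. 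Since $j^{-1}\bmod p$ depends only on $j\bmod p$, grouping $j$ by residue modulo $p$ and summing $\sum_{j\equiv c\,(p)}P^{-kj}=P^{k(p-c)}/(P^{kp}-1)$ collapses this to
\[
\sum_{P\in\cP_1}G_p(1/P^{k})=-\sum_{P\in\cP_1}\frac{M(P^{k})}{(P^{k}-1)^{p}},\qquad M(X):=\sum_{c=1}^{p-1}\frac{X^{c}}{c}
\]
(using $(P^{k}-1)^{p}=P^{kp}-1$). For $p=2$ we have $M(X)=X$, and $-X/(X-1)^2=X/(X-1)^2=1/(X-1)+1/(X-1)^2$ in characteristic $2$, so the sum equals $s_k+s_k^{2}$ with $s_k:=\sum_{P\in\cP_1}(P^{k}-1)^{-1}$ (in characteristic $2$, $\sum_P(P^k-1)^{-2}=s_k^2$); since $k\equiv 0\bmod q-1$, Theorem~\ref{main2} gives $s_k\in\cK$ and the result follows. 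Assume henceforth that $p$ is odd.

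For odd $p$ the partial-fraction pieces $\sum_{P\in\cP_1}(P^{k}-1)^{-\ell}$ ($1\le\ell\le p-1$) need not individually lie in $\cK$, so the content is a cancellation, which I would surface by passing to sums over all monic polynomials. The bridge is the Euler product $Z_{n}(u):=\prod_{P}(1-u^{\deg P}P^{-n})^{-1}=\sum_{F\text{ monic}}u^{\deg F}F^{-n}$; the usual tools for inverting it (logarithmic derivative, Möbius inversion over the factorization) divide by integers and so fail in characteristic $p$ exactly at the multiples of $p$, and $G_p$ is precisely the combination from which those bad terms have been purged. To use this I would lift to characteristic zero: choose multiplicative lifts $\widehat P\in\ZZ_q[T]$ of the monic irreducibles, $\ZZ_q$ the unramified extension of $\ZZ_p$ with residue field $\FF_q$. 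The identity $1+p\,G_p(U)=(1-U^{p})/(1-U)^{p}$ — immediate from the definition of $G_p$ — yields
\[
\prod_{P}\bigl(1+p\,G_p(\widehat P^{-k})\bigr)=\prod_{P}\frac{1-\widehat P^{-pk}}{(1-\widehat P^{-k})^{p}}=\frac{\widehat Z_k^{\,p}}{\widehat Z_{pk}},\qquad \widehat Z_{m}:=\prod_{P}(1-\widehat P^{-m})^{-1},
\]
where $\widehat Z_m$ reduces modulo $p$ to $Z_m=\sum_{F\text{ monic}}F^{-m}$ and each $G_p(\widehat P^{-k})$ is $p$-integral (here $p$ odd is used). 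Expanding the left product and reducing modulo $p$ identifies $\sum_{P\in\cP_1}G_p(1/P^{k})$ with the reduction of $\tfrac1p\bigl(\widehat Z_k^{\,p}/\widehat Z_{pk}-1\bigr)$, the divisibility by $p$ being the mod-$p$ shadow of the Frobenius identity $Z_k^{p}=Z_{pk}$. The problem now concerns the complete zeta sums, which one evaluates degree by degree: expanding $F^{-n}$ for $F=T^{d}+c_{d-1}T^{d-1}+\cdots+c_0$ and summing over the $c_i$ via $\sum_{c\in\FF_q}c^{m}=-1$ for $m>0$, $(q-1)\mid m$ (and $0$ otherwise) turns $\sum_{\deg F=d}F^{-n}$, and its lift, into an explicit series in $T^{-1}$ — the hypothesis $(q-1)\mid k$ being exactly what makes these collapses occur and be uniform in $d$ — and one then assembles the combination above. (A characteristic-$p$ variant would instead invoke Carlitz's formulas for $\sum_{\deg F=d}F^{-n}$ directly, together with the same character sum and Möbius inversion over the degree.)

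The genuinely hard step is the last one: showing that the explicit series so produced is a rational function, i.e.\ that the transcendental-over-$\FF_q(T)$ parts of $Z_k$ and $Z_{pk}$ cancel in the assembled combination. Two further inputs should help. First, an affine symmetry: since $(q-1)\mid k$ one has $(aP)^{k}=P^{k}$ for every $a\in\FF_q^{\times}$, so $\sum_{P\in\cP_1}G_p(1/P^{k})$ is fixed by every substitution $T\mapsto aT+b$ and hence lies in the $T^{-1}$-adic completion of the fixed field $\FF_q\!\left((T^{q}-T)^{q-1}\right)$, which sharply constrains its form. Second, the explicit form from the previous paragraph should permit a bound on the degrees of the numerator and denominator of the candidate rational function, after which the identity is pinned down by matching finitely many $T^{-1}$-adic coefficients. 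That degree bound — and the cancellation underlying it — is what I expect to be the crux of the proof.
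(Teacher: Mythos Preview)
Your reduction is essentially Proposition~\ref{formula} in disguise. The identity $1+pG_p(U)=(1-U^p)/(1-U)^p$, unwound over all $P$, is the Euler-product form of the statement that
\[
\sum_{P\in\cP_1}G_p(1/P^k)=\frac{\bg_p(1/A^k)_{A\in\cA_1}}{\sum_{A\in\cA_1}1/A^{pk}},\qquad
\bg_p(X_1,X_2,\dots)=\tfrac1p\Bigl(\bigl(\textstyle\sum X_i\bigr)^p-\sum X_i^p\Bigr),
\]
where $\bg_p$ is a symmetric polynomial with \emph{integer} coefficients; the paper arrives at the same formula combinatorially. So you have correctly located the problem: both numerator and denominator are sums over \emph{all} monic polynomials, and one must show their ratio lies in $\cK$.

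That is exactly the step you do not do, and your proposed route does not close the gap. Your $\widehat Z_m$ is defined as a \emph{product} over lifted primes, and there is no reason it agrees modulo $p^2$ (which is what you need) with a \emph{sum} $\sum_F\widehat F^{-m}$ over monic lifts, since lifting is not multiplicative; so the ``explicit series in $T^{-1}$'' you describe is not the object you must analyze. The affine symmetry and a hoped-for degree bound do not by themselves force a power series to be rational. The ingredient you are missing is the Carlitz exponential: Proposition~\ref{monicKey} says that any homogeneous symmetric polynomial of degree $\ell$ evaluated at $\{1/A^{q-1}\}_{A\in\cA_1}$ lies in $\overline{\pi}^{\ell(q-1)}\cK$. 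Applying this separately to $\bg_p$ and to the denominator puts both in $\overline{\pi}^{pk}\cK$, and the $\overline{\pi}^{pk}$ cancels in the ratio. This is a one-line consequence of a genuinely transcendental input (the product formula for $e_C$), not a combinatorial cancellation of the kind you are looking for. Finally, your $p=2$ case is circular: since $G_2(1/P^k)=1/(P^k-1)$, the $p=2$ case of Theorem~\ref{mainOdd} \emph{is} Theorem~\ref{main2}, and in the paper the latter is deduced from the former; you also applied the odd-$p$ power series for $G_p$ when $p=2$, which the remark after Theorem~\ref{mainOdd} warns against (hence your $s_k+s_k^2$ rather than $s_k$).
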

As we noted, $G_2(1/X)=1/(X-1)$,  so Theorem~\ref{mainOdd} implies Theorem~\ref{main2}.

\begin{remark}
When $p=2$, we do \emph{not} have $G_2(U) = \sum_{ j \not \equiv 0 \bmod p} \tfrac{U^j}{j}$; the latter sum is $H(U) := \frac{U}{1-U^2}$. However, it is true that $\sum_{P \in \cP_1} H(1/P^k)$ is in $\cK$, because $H(U) = G(U) - G(U^2)$.
\end{remark}

Once again, we have a trivial variant where we sum over $\cP$:
\begin{cor}
For any positive integer $k$, the sum 
\[ \sum_{P \in \cP} G_p(1/P^k) \]
is in $\cK$. 
\end{cor}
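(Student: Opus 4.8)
The plan is to imitate the proof of Corollary~\ref{notMonic}, with the rational function $1/(X^k-1)$ replaced throughout by $G_p$. Every $P \in \cP$ factors uniquely as $P = aQ$ with $Q \in \cP_1$ and $a \in \FF_q^\times$. After checking that the resulting double sum converges and may be reassociated in the $T^{-1}$-adic topology — immediate, since $G_p$ vanishes at $U = 0$, so $G_p(1/P^k)$ has $T^{-1}$-adic valuation at least $k \deg P$, and there are only finitely many $P$ of each degree — we obtain
\[ \sum_{P \in \cP} G_p(1/P^k) = \sum_{Q \in \cP_1} \Bigl( \sum_{a \in \FF_q^\times} G_p\bigl(a^{-k}/Q^k\bigr) \Bigr) , \]
so it suffices to evaluate the inner sum.

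For that, expand $G_p$ as a power series in its argument, $G_p(U) = \sum_{j \geq 1} c_j U^j$ with $c_j \in \FF_p$. The explicit formulas of Proposition~\ref{GAltForms} (for odd $p$), together with $G_2(U) = U/(1-U)$ (for $p = 2$), show that these coefficients satisfy $c_{dj} = d^{-1} c_j$ for every $j \geq 1$ and every positive integer $d$ coprime to $p$. Now
\[ \sum_{a \in \FF_q^\times} G_p(a^{-k} U) = \sum_{j \geq 1} c_j U^j \sum_{a \in \FF_q^\times} a^{-kj} , \]
and the character sum identity recalled in the proof of Corollary~\ref{notMonic} makes the inner sum over $a$ equal to a fixed element of $\FF_p^\times$ when $(q-1) \mid kj$, and $0$ otherwise. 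Put $d = (q-1)/\GCD(q-1,k)$, so that $(q-1) \mid kj$ becomes $d \mid j$ and $kd = \LCM(k,q-1) =: k'$; since $d$ divides $q-1$ it is coprime to $p$. Keeping only the terms with $j = dm$ and using $c_{dm} = d^{-1}c_m$, the left-hand side becomes a nonzero scalar multiple of $\sum_{m \geq 1} c_m U^{dm} = G_p(U^d)$, say $c\, G_p(U^d)$ with $c \in \FF_p^\times$.

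Substituting $U = 1/Q^k$, so that $U^d = 1/Q^{k'}$, yields
\[ \sum_{P \in \cP} G_p(1/P^k) = c \sum_{Q \in \cP_1} G_p(1/Q^{k'}) . \]
Since $k' = \LCM(k,q-1) \equiv 0 \bmod q-1$, Theorem~\ref{mainOdd} gives $\sum_{Q \in \cP_1} G_p(1/Q^{k'}) \in \cK$, whence the left-hand side lies in $\cK$ as well. The only delicate point is the collapse of the decimated series back to $G_p(U^d)$, which rests entirely on $d$ being coprime to $p$ (equivalently, on $p \nmid q-1$); the remaining manipulations are exactly the rearrangement bookkeeping of the proof of Corollary~\ref{notMonic}, so I anticipate no genuine obstacle.
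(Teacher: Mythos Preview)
Your proof is correct and follows essentially the same route as the paper's: rewrite the sum over $\cP$ as a double sum over $\cP_1$ and $\FF_q^\times$, evaluate the inner sum over $a$ by expanding $G_p$ as a power series and using the character-sum identity, recognize the result as a scalar multiple of $G_p(U^{\LCM(k,q-1)})$, and then invoke Theorem~\ref{mainOdd}. The one cosmetic difference is that the paper disposes of $p=2$ separately by citing Corollary~\ref{notMonic} and then uses the explicit series $G_p(U)=\sum_{p\nmid j} U^j/j$ for odd $p$, whereas you handle both parities uniformly via the scaling relation $c_{dj}=d^{-1}c_j$ for $\gcd(d,p)=1$; your constant $c$ agrees with the paper's $\GCD(q-1,k)$ in $\FF_p$ since $d\cdot\GCD(q-1,k)=q-1\equiv -1$.
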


\begin{proof}
If $p=2$, we proved this in Corollary~\ref{notMonic}, so we may (and do) assume $p$ is odd.
As in the proof of Corollary~\ref{notMonic}, we rewrite the sum as $\sum_{P \in \cP_1} \sum_{a \in \FF_q^{\times}} G_p(1/(a P)^k)$.
We now need the identity
\[ \sum_{a \in \FF_q^{\times}} G_p((a U)^k) =   \GCD(q-1,k)  G_p(U^{\LCM(q-1,k)})   \]
in $\FF_q(U)$.  To prove this identity, we use the formula $G_p(U) = \sum_{j \not \equiv 0 \bmod p} \tfrac{U^j}{j}$ and the identity
\[ \sum_{a \in \FF_q^{\times}} a^m = \begin{cases} q-1 & m \equiv 0 \bmod q-1 \\ 0 & \mbox{otherwise} \end{cases} . \]
So
\[ \sum_{a \in \FF_q^{\times}} G_p(1/(aU)^k) = \sum_{j \not \equiv 0 \bmod p} \sum_{a \in \FF_q^{\times}} \frac{1}{j (aU)^{kj}} = (q-1)  \sum_{\substack{j \not \equiv 0 \bmod p \\ kj \equiv 0 \bmod q-1}} \frac{1}{j U^{kj}}. \]
Putting $kj = \LCM(q-1,k) \ell$, this is
\[ \begin{array}{ll}
& (q-1) \sum_{\ell \not \equiv 0 \bmod p} \frac{k}{\LCM(q-1,k) \ell \ U^{\LCM(q-1,k) \ell}}  \\
= & \frac{k(q-1)}{\LCM(q-1,k)} G_p(U^{\LCM(q-1,k)})   \\
= &  \GCD(q-1,k)  G_p(U^{\LCM(q-1,k)})   \\
 \end{array}  \]
as required. 
\end{proof}

We also compute explicit values for the sum when $k$ is not too large. 
\begin{theorem} \label{explicit}
Let $k = (q-1) \ell$. If $1 \leq \ell \leq q/p$, then $\sum_{P \in \cP_1} G_p(1/P^k) =0$. If  $q/p+1 < \ell \leq 2 q/p$, then 
\[ \sum_{P \in \cP_1} G_p(1/P^k) =  \ell  \frac{(T^q-T)^{q+1}}{(T^{q^2}-T^q)(T^{q^2}-T)} . \]
\end{theorem}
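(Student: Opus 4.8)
The plan is to exploit the symmetry built into the sum to reduce $f_k := \sum_{P\in\cP_1} G_p(1/P^k)$ to a one-parameter family of rational functions, and then to pin the answer down with a single Laurent coefficient. Write $k=(q-1)\ell$ and $\theta := T^q - T$; recording the identities $T^{q^2}-T^q = \theta^q$ and $T^{q^2}-T = \theta^q + \theta = \theta(\theta^{q-1}+1)$, one sees that the asserted value in the second range is just $\ell/(\theta^{q-1}+1) = \ell/\prod_{\deg P = 2} P$, and it is this cleaned-up form that I would aim for.

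First I would reduce to a rational function of $\omega := (T^q - T)^{q-1}$. For $\lambda\in\FF_q^{\times}$ and $a\in\FF_q$, the map $P(T)\mapsto \lambda^{-\deg P} P(\lambda T + a)$ is a bijection of the monic irreducibles of each fixed degree, and since $\lambda^{k\deg P} = (\lambda^{q-1})^{\ell\deg P} = 1$ it carries $G_p(1/P(T)^k)$ to $G_p(1/P(\lambda T+a)^k)$; hence $f_k$ is invariant under every substitution $T\mapsto\lambda T+a$. The fixed field of this affine group is $\FF_q(\omega)$ (the element $\omega$ is invariant, and $[\FF_q(T):\FF_q(\omega)] = \deg_T\omega = q(q-1)$ equals the order of the group), so by Theorem~\ref{mainOdd} we have $f_k = \rho(\omega)$ for some $\rho\in\FF_q(\omega)$. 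Furthermore $G_p(U) = U + O(U^2)$, so $f_k\to 0$ as $T\to\infty$; hence $\rho$ is a proper fraction, vanishing at $\omega = \infty$.

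Second, I would prove that for $\ell\le 2q/p$ the only finite pole of $f_k$ is a simple pole along $\prod_{\deg P=2}P = \theta^{q-1}+1 = \omega+1$, and that there is no pole at all once $\ell\le q/p$. Granting this, the two observations above force $\rho(\omega) = c_k/(\omega+1)$ for a single constant $c_k\in\FF_q$, and the theorem reduces to showing $c_k = 0$ for $\ell\le q/p$ and $c_k = \ell$ for $q/p < \ell\le 2q/p$. This pole bound is the crux, and the step I expect to be the main obstacle: it is a quantitative sharpening of Theorem~\ref{mainOdd}, to be obtained by localizing the cancellation---at each monic prime $Q$ one groups the irreducibles $P$ by their residues modulo the powers of $Q$, sums the principal parts of $G_p(1/P^k)$ at $Q$ over each such class, and shows these vanish whenever $\deg Q\ne 2$, and also when $\deg Q = 2$ in case $\ell\le q/p$. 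The threshold $q/p$ enters through the $p$-adic valuations---equivalently, the Lucas congruences modulo $p$---of the binomial coefficients produced by expanding $G_p(1/P^k)$ and summing $P$ over a residue class.

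Finally I would compute $c_k$, which is exactly the coefficient of $T^{-(q^2-q)}$ in the $T^{-1}$-adic expansion of $f_k$ (since $\omega + 1$ is monic of degree $q^2 - q$). Expand $G_p(1/P^k)$ as a power series in $1/P$---equal to $\sum_{p\nmid j} j^{-1}P^{-kj}$ for odd $p$ by Proposition~\ref{GAltForms}, and to $\sum_{j\ge 1} P^{-kj}$ for $p = 2$---interchange the sums, group the primes $P$ by degree, and rewrite $\sum_{\deg P = d} P^{-m}$ by M\"obius inversion over the monoid of monic polynomials in terms of the power sums $\psi_m(d) := \sum_{\deg A = d,\, A\text{ monic}} A^{-m}$. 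Now compute each $\psi_m(d)$ by writing $A = T^d\bigl(1 + \sum_{i=1}^{d} c_{d-i}T^{-i}\bigr)$, expanding $A^{-m}$, and summing every monomial in the coefficients $c_0,\dots,c_{d-1}$ over $\FF_q$ with the elementary identity $\sum_{c\in\FF_q} c^e = -1$ if $(q-1)\mid e$ and $e\ge 1$, and $0$ otherwise. Only monomials in which each $c_{d-i}$ occurs to a positive multiple of $q-1$ survive, so $\psi_m(d)$ vanishes to order at least $dm + (q-1)d(d+1)/2$, and the surviving leading terms are explicit binomial coefficients $\binom{-m}{\,\cdot\,}$ to be read modulo $p$ by Lucas' theorem. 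This order bound shows that only finitely many terms contribute to the coefficient of $T^{-(q^2-q)}$ in $f_k$; assembling them, and using the Lucas congruences from the previous step, yields $c_k = 0$ for $1\le\ell\le q/p$ and $c_k = \ell$ for $q/p < \ell\le 2q/p$, completing the proof.
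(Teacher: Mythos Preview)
Your strategy is genuinely different from the paper's and the reformulation in Step~1 is correct and attractive: the affine invariance does place $f_k$ in $\FF_q(\omega)$ with $\omega=(T^q-T)^{q-1}$, and the target value really is $\ell/(\omega+1)$. Step~3, the extraction of a single Laurent coefficient via M\"obius inversion and the power sums $\psi_m(d)$, is also a reasonable and standard computation.

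The gap is Step~2, and you have essentially flagged it yourself. The problem is not just that the argument is unfinished; the method you sketch is not well defined. Each individual term $G_p(1/P^k)$ is a rational function of $T$ whose poles lie at the irreducible factors of $P(T)^k-1$, and for a fixed prime $Q$ there are \emph{infinitely} many $P$ (for instance all $P\equiv 1\bmod Q$) whose term has a nonzero principal part at $Q$. The identity $f_k=\sum_P G_p(1/P^k)$ holds only as a $T^{-1}$-adic limit, and there is no reason the operation ``take principal part at $Q$'' should commute with that limit, nor that the resulting infinite sum of principal parts should converge $Q$-adically. So ``sum the principal parts over residue classes modulo $Q$'' does not give access to the pole of $f_k$ at $Q$. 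Without the pole bound, knowing that $f_k\in\FF_q(\omega)$ and vanishes at infinity still leaves infinitely many unknowns, and the single coefficient in Step~3 cannot pin $f_k$ down.

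The paper avoids this difficulty entirely. It uses the closed formula
\[
f_k=\frac{\bg_p(1/A^k)_{A\in\cA_1}}{\sum_{A\in\cA_1}1/A^{pk}}
\]
from Proposition~\ref{formula}, expands $\bg_p(X_1^{\ell},X_2^{\ell},\dots)$ and $\bp_{\ell}$ in the basis $\be_\lambda$, and then invokes Proposition~\ref{monicElementary}, which says $\be_m(1/A^{q-1})_{A\in\cA_1}=0$ unless $m=(q^j-1)/(q-1)$. For $\ell\le q/p$ every relevant $\be_\lambda$ vanishes (after Lemma~\ref{OneCoeffVanish} kills the all-ones partition); for $q/p<\ell\le 2q/p$ exactly one $\be_\lambda$ survives in numerator and denominator, and the ratio is computed directly. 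So the paper never needs to locate or bound poles: the Carlitz-exponential input makes the answer drop out of a finite symmetric-function calculation. If you want to salvage your outline, the most direct route to Step~2 would be to import exactly this machinery to bound the denominator of $f_k$, at which point you are essentially reproducing the paper's proof.
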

In principle, our methods are capable of computing $\sum_{P \in \cP_1} G_p(1/P^k)$ for any $k \equiv 0 \bmod q-1$, but they become impractical beyond $\ell = 2 q/p$.

\subsection{History of the problem} Dinesh Thakur suspected such relations should exist, based on heuristics concerning $\zeta$ deformation. He experimentally discovered most of the relations described above in characteristic two, and suspected there should be similar results in odd chracteristic. 
Thakur published these computations in a preprint entitled ``Surprising symmetries in distribution of prime polynomials"~\cite{Thakur}.
At Thakur's suggestion, Terence Tao promoted the problem in posts on his blog and on the Polymath blog~\cite{polymath}. 
I am grateful to Thakur for finding such an elegant problem and to Tao for bringing it to my attention.
My thanks also to all who participated in the discussion on the Polymath blog: Noam Elkies, Ian Finn, Ofir Gorodetsky, Jesse,  Gil Kalai, David Lowry-Duda, Dustin G. Mixon, John Nicol, Partha Solapurkar, John Voight, Victor Wang, Qiaochu Yuan, Joshua Zelinsky.
 The author is supported by NSF grant DMS-1600223.

\section{The Carlitz exponential, and symmetric polynomials}

The main tool in our proofs is the theory of the Carlitz exponential.
Put 
\[ D_i = (T^{q^i}-T)(T^{q^i}-T^q)(T^{q^i}-T^{q^2}) \cdots (T^{q^i} - T^{q^{i-1}}) . \]
Define
\[
 e_C(Z) = \sum_{j=0}^{\infty} \frac{T^{q^j}}{D_j}
\]
this sum is $T^{-1}$-adically convergent for any $Z \in \wcK$. We will make use of the product identity:
\[ \frac{e_C(\overline{\pi} Z)}{\overline{\pi} Z} = \prod_{A \in \cR \setminus \{ 0 \}} \left( 1+\frac{Z}{A} \right) \]
where $\overline{\pi} \in \wcK(\! \sqrt[q-1]{-T})$ is given by
\[ \overline{\pi} = \frac{T \sqrt[q-1]{-T}}{\prod_{A \in \cR \setminus \{ 0 \}} (1-(TA)^{-1})} . \]
See, for example,~\cite[Theorem 3.2.8]{Goss}.
This identity should be thought of as  similar to Euler's identity:
\[ \frac{\sin (\pi z)}{\pi z} = \prod_{a \in \ZZ \setminus \{ 0 \}} \left( 1+\frac{z}{a} \right) . \]

We introduce the notations $\cA$ for the nonzero polynomials of $\cR$, and $\cA_1$ for the monic polynomials.

Writing $\be_k$ for the elementary symmetric function of degree $k$, this implies
\[ \be_k(1/A)_{A \in \cA} = 
\begin{cases} \overline{\pi}^k/D_{j} & k = q^j-1\\ 0 & \mbox{otherwise} \end{cases} \]
Since the ring of symmetric polynomials is generated by the $\be_k$, we deduce
\begin{prop} \label{Key}
If $\bf$ is a homogenous symmetric polynomial of degree $k$, then $\bf(1/A)_{A \in \cA}$ is in $\overline{\pi}^k \cK$.
\end{prop}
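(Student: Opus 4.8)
The plan is to reduce the statement about an arbitrary homogeneous symmetric polynomial $\bf$ of degree $k$ to the already-recorded computation of $\be_k(1/A)_{A \in \cA}$. The ring of symmetric functions is a polynomial ring on the elementary symmetric functions $\be_1, \be_2, \be_3, \dots$, so $\bf$ can be written as a polynomial in the $\be_i$; because $\bf$ is homogeneous of degree $k$, only monomials $\be_{i_1} \be_{i_2} \cdots \be_{i_r}$ with $i_1 + i_2 + \cdots + i_r = k$ occur. Here one must be slightly careful: the $\be_i$ are symmetric functions in infinitely many variables (indexed by $A \in \cA$), so the statement ``$\bf$ is a polynomial in the $\be_i$'' needs to be read in the ring of symmetric functions in infinitely many variables, and then specialized by the substitution $x_A \mapsto 1/A$. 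Convergence of this specialization in the $T^{-1}$-adic topology is not an issue for a single homogeneous $\bf$, since $\bf$ is a (finite) polynomial in finitely many $\be_i$, and each $\be_i(1/A)_{A \in \cA}$ already converges by the product identity quoted above.

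Next I would evaluate. By the displayed formula preceding the proposition, $\be_i(1/A)_{A \in \cA}$ is $\overline{\pi}^{\,i}/D_j$ when $i = q^j - 1$ for some $j \geq 0$, and is $0$ otherwise. In particular, in every case $\be_i(1/A)_{A \in \cA} \in \overline{\pi}^{\,i} \cK$: this is clear when the value is $0$, and when $i = q^j-1$ it holds because $D_j \in \cR \subseteq \cK$ and $D_j \neq 0$, so $\overline{\pi}^{\,i}/D_j \in \overline{\pi}^{\,i}\cK$. Now for a monomial $\be_{i_1} \cdots \be_{i_r}$ with $\sum_t i_t = k$, its value under the substitution is the product $\prod_t \be_{i_t}(1/A)_{A \in \cA} \in \prod_t \overline{\pi}^{\,i_t} \cK = \overline{\pi}^{\,k} \cK$, using that $\cK$ is a field (so closed under products) and that the exponents of $\overline{\pi}$ add up to $k$. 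Finally, $\bf(1/A)_{A \in \cA}$ is an $\FF_q$-linear combination (in fact a $\ZZ$-linear combination of products, with coefficients the structure constants expressing $\bf$ in the $\be_i$) of such monomial values, hence lies in the $\cK$-vector space $\overline{\pi}^{\,k}\cK$, which is what we wanted.

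The only genuinely delicate point is the first step: making precise the sense in which a homogeneous symmetric function in infinitely many variables is a polynomial in the $\be_i$, and checking that the resulting infinite specialization is legitimate. This is standard — the inverse limit of the rings of symmetric polynomials in $n$ variables (in the category of graded rings) is the polynomial ring $\ZZ[\be_1, \be_2, \dots]$, and evaluating a \emph{fixed} homogeneous element at $x_A = 1/A$ only ever involves the finitely many $\be_i$ with $i \leq k$ and converges termwise as above — so I expect it to be dispatched in a sentence or two rather than to constitute a real obstacle. Everything after that is the bookkeeping of exponents of $\overline{\pi}$ described in the previous paragraph.
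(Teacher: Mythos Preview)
Your proposal is correct and follows exactly the paper's approach: the paper's argument is the single sentence ``Since the ring of symmetric polynomials is generated by the $\be_k$, we deduce'' immediately preceding the proposition, and you have simply spelled out the details of that deduction. Your additional remarks on convergence and on the passage to infinitely many variables are accurate and go slightly beyond what the paper bothers to say, but the underlying idea is identical.
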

Here we note that $\bf(1/A)_{A \in \cA}$ is always defined, since only finitely many terms contribute to the coefficient of any particular power of $T^{-1}$.

The above considers symmetric polynomials in $\{ 1/A \}_{A \in \cA}$, but we would rather restrict to the case of $A$ monic. To this end, we have
\begin{prop} \label{monicElementary}
\[ \be_{\ell}(1/A^{q-1})_{A \in \cA_1} = 
\begin{cases} (-1)^{\ell} \overline{\pi}^{\ell(q-1)}/D_j & \ell = \tfrac{q^j-1}{q-1} \\ 0 & \mbox{otherwise}
\end{cases}\]
\end{prop}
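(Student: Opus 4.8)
The plan is to relate the elementary symmetric functions in the monic variables $\{1/A^{q-1}\}_{A \in \cA_1}$ to the elementary symmetric functions in all variables $\{1/A\}_{A \in \cA}$ via a product-over-cosets argument. The key observation is that $\cA = \bigsqcup_{a \in \FF_q^\times} a\,\cA_1$, so each nonzero polynomial $A$ is uniquely $a B$ with $B$ monic and $a \in \FF_q^\times$. Therefore the generating function for the elementary symmetric functions factors:
\[
\prod_{A \in \cA}\left(1 + \frac{X}{A}\right) = \prod_{B \in \cA_1} \prod_{a \in \FF_q^\times}\left(1 + \frac{X}{aB}\right) = \prod_{B \in \cA_1}\left(1 - \frac{(-1)^{q-1} X^{q-1}}{B^{q-1}}\right),
\]
where in the last step I use that $\prod_{a \in \FF_q^\times}(1 + Y/a) = \prod_{c \in \FF_q^\times}(1 + cY) = 1 - (-1)^{q-1} Y^{q-1}$, since the $c$ range over all of $\FF_q^\times$ and $\prod_{c \in \FF_q^\times} c = -1$. (When $q$ is even, $(-1)^{q-1} = 1 = -1$, so the sign is harmless; when $q$ is odd, $q-1$ is even and $(-1)^{q-1}=1$ as well — so in fact the factor is simply $1 - X^{q-1}/B^{q-1}$, but I will keep the sign bookkeeping explicit.)

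Now I would read off coefficients. On the left-hand side, the coefficient of $X^k$ is $\be_k(1/A)_{A \in \cA}$, which by the displayed consequence of the Carlitz product identity (just above Proposition~\ref{Key}) equals $\overline{\pi}^k/D_j$ when $k = q^j - 1$ and $0$ otherwise. On the right-hand side, setting $Y = -(-1)^{q-1} X^{q-1}$, the product $\prod_{B \in \cA_1}(1 + Y/B^{q-1})$ has coefficient of $Y^\ell$ equal to $\be_\ell(1/B^{q-1})_{B \in \cA_1}$. Matching powers: $X^k$ appears on the right only when $k = (q-1)\ell$, and then the coefficient of $X^{(q-1)\ell}$ is $\big(-(-1)^{q-1}\big)^\ell \be_\ell(1/B^{q-1})_{B \in \cA_1} = (-1)^{\ell q}\,\be_\ell(1/B^{q-1})_{B \in \cA_1}$. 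Comparing with the left side, $\be_\ell(1/A^{q-1})_{A \in \cA_1}$ is nonzero precisely when $(q-1)\ell = q^j - 1$, i.e. $\ell = (q^j-1)/(q-1)$, and in that case
\[
(-1)^{\ell q}\,\be_\ell(1/A^{q-1})_{A \in \cA_1} = \frac{\overline{\pi}^{(q-1)\ell}}{D_j},
\]
so $\be_\ell(1/A^{q-1})_{A \in \cA_1} = (-1)^{\ell q}\,\overline{\pi}^{(q-1)\ell}/D_j$. Since $(-1)^{\ell q} = (-1)^\ell$ (as $(-1)^{q} = -1$ when $q$ is odd and everything is characteristic $2$ when $q$ is even), this gives exactly the claimed formula.

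The main thing to be careful about — rather than a deep obstacle — is the convergence and manipulation of these infinite products in the $T^{-1}$-adic topology: one must check that the rearrangement $\prod_{A\in\cA} = \prod_{B\in\cA_1}\prod_{a\in\FF_q^\times}$ and the expansion of each finite inner product are legitimate, which follows because for each fixed power of $X$ and each fixed power of $T^{-1}$ only finitely many factors contribute (the coefficient of $X^k$ in the product involves polynomials $A$ of bounded degree, and $1/A^{q-1}$ has small $T^{-1}$-adic size as $\deg A$ grows). I would dispatch this with the same remark already used after Proposition~\ref{Key}. A secondary bookkeeping point is tracking the sign $(-1)^\ell$ versus $(-1)^{\ell q}$ versus $(-1)^{\ell(q-1)}$ consistently across the even and odd $q$ cases; stating it once as $(-1)^\ell$ and noting $(-1)^{\ell(q-1)} = 1$ when $q$ is odd (and $2 = 0$ when $q$ is even) keeps this clean.
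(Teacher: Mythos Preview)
Your argument is correct and is essentially the same as the paper's: both group the factors of the Carlitz product $\prod_{A\in\cA}(1+Z/A)=e_C(\overline{\pi}Z)/(\overline{\pi}Z)$ by $\FF_q^{\times}$-cosets to obtain $\prod_{A\in\cA_1}(1-Z^{q-1}/A^{q-1})$ and then read off the coefficient of $Z^{\ell(q-1)}$. Your sign bookkeeping is more elaborate than needed---since $(-1)^{q-1}=1$ in $\FF_q$ in all cases, the inner product is simply $1-Y^{q-1}$ and the coefficient is $(-1)^{\ell}\be_{\ell}$ directly---but the substance is identical.
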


\begin{proof}
Grouping together scalar multiples of the same polynomial in the Carlitz product identity, we have
\[ \frac{e_C(\overline{\pi} Z)}{\overline{\pi} Z} = \prod_{A \in \cA_1} \left( 1-\frac{Z^{q-1}}{A^{q-1}} \right). \]
Equate coefficients of $Z^{\ell (q-1)}$ on both sides.
\end{proof}

%

\begin{cor} \label{monicKey}
If $\bf$ is a homogenous symmetric polynomial of degree $\ell$, then $\bf(1/A^{q-1})_{A \in \cA_1}$ is in $\overline{\pi}^{\ell (q-1)} \cK$.
\end{cor}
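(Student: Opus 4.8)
The plan is to derive Corollary~\ref{monicKey} from Proposition~\ref{monicElementary} in precisely the way that Proposition~\ref{Key} was derived from the values $\be_k(1/A)_{A \in \cA}$. First I would invoke the fundamental theorem of symmetric functions to write the homogeneous symmetric polynomial $\bf$ of degree $\ell$ as a polynomial in $\be_1, \dots, \be_\ell$; assigning $\be_i$ the weight $i$, homogeneity forces this to be a \emph{finite} linear combination of monomials $\be_{\mu_1} \be_{\mu_2} \cdots \be_{\mu_r}$ with $\mu_1 + \mu_2 + \cdots + \mu_r = \ell$.

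Next I would evaluate at the family $(1/A^{q-1})_{A \in \cA_1}$, using that this evaluation is well defined and commutes with sums and products: since $1/A^{q-1}$ has $T^{-1}$-adic valuation $(q-1)\deg A$, only finitely many monomials in the $1/A^{q-1}$ contribute to the coefficient of any fixed power of $T^{-1}$, so the relevant series converge unconditionally in $\wcK$ — this is the observation already made after Proposition~\ref{Key}. Thus $\bf(1/A^{q-1})_{A \in \cA_1}$ equals the corresponding linear combination of the products $\be_{\mu_1}(1/A^{q-1})_{A \in \cA_1} \cdots \be_{\mu_r}(1/A^{q-1})_{A \in \cA_1}$. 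By Proposition~\ref{monicElementary} each factor $\be_{\mu_i}(1/A^{q-1})_{A \in \cA_1}$ is either $0$ or of the form $\pm\, \overline{\pi}^{\mu_i(q-1)}/D_j$, hence lies in $\overline{\pi}^{\mu_i(q-1)} \cK$; multiplying and using $\sum_i \mu_i = \ell$, each product lies in $\overline{\pi}^{\ell(q-1)} \cK$, and since this is a $\cK$-subspace of $\wcK$ the whole linear combination lies in it, which is the assertion.

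I do not expect any real obstacle: the argument is a mechanical consequence of Proposition~\ref{monicElementary} together with the fundamental theorem of symmetric functions, entirely parallel to the passage from the $\be_k$ to Proposition~\ref{Key}. The only point deserving a word of justification is that evaluation at the infinitely many ``variables'' $1/A^{q-1}$ really is a ring homomorphism into $\wcK$, and this is settled by the same unconditional-convergence remark used for Proposition~\ref{Key}.
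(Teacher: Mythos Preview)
Your proposal is correct and is exactly the argument the paper intends: Corollary~\ref{monicKey} is stated without proof because it follows from Proposition~\ref{monicElementary} by the same ``since the ring of symmetric polynomials is generated by the $\be_k$'' reasoning used to obtain Proposition~\ref{Key}. Your write-up simply makes that one-line deduction explicit.
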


\section{Proofs of rationality}

We now have enough background to prove  Theorem~\ref{mainOdd} and, hence, Theorem~\ref{main2}. 
Throughout, let $k \equiv 0 \bmod q-1$.

Consider the symmetric polynomial 
\[ \bg_p(X_1, X_2, \ldots,) := \frac{1}{p} \left( \left( \sum X_i \right)^p - \sum X_i^p \right). \]
The polynomial $\bg_p$ has integer coefficients, so we may discuss plugging elements of $\cK$ into it.


Let $C$ be the cyclic group of order $p$, and let $C$ act on $\cA_1^p$ by rotating coordinates. Let $\Delta$ denote the small diagonal: $\Delta := \{ (A,A,\ldots,A) \}  \subset \cA_1^p$.
Then
\[ \bg_p(1/A^k)_{A \in \cA_1} = \sum_{(A_1, \ldots, A_p) \in (\cA_1^p \setminus \Delta)/C } \frac{1}{A_1^k A_2^k \cdots A_p^k}. \]
The sum is over cosets for the free action of $C$ on $\cA^p \setminus \Delta$.

Let
\[ \Phi = \{ (A_1, \ldots, A_p) \in \cA_1^p : \GCD(A_1, \ldots, A_p) = 1 \}. \]
Any $(A_1, \ldots, A_p) \in \cA_1^p$ can be uniquely factored as $A_i = D B_i$ for some $D \in \cA_1$ and $(B_1,\ldots, B_p) \in \Phi$. 
So we can factor the above sum as 
\[ \bg_p(1/A^k)_{A \in \cA_1} = \left( \sum_{D \in \cA_1} \frac{1}{D^{kp}} \right) \left( \sum_{(B_1, \ldots, B_p) \in (\Phi \setminus \{(1,\ldots,1)\})/C } \frac{1}{B_1^k B_2^k \cdots B_p^k} \right). \]

Now, from Proposition~\ref{monicKey}, $\bg_p(1/A^k)_{A \in \cA}$, is in $\overline{\pi}^{pk} \cK$. 
Also from Proposition~\ref{monicKey}, $\sum_{D \in \cA_1} 1/D^{kp}$ is in $\overline{\pi}^{pk} \cK$, and a quick computation shows that this sum is $1$ plus terms in $T^{-1} \FF_q[[T^{-1}]]$, so it is not zero.
We deduce that
\[  \sum_{(B_1, \ldots, B_p) \in (\Phi \setminus (1,\ldots,1))/C } \frac{1}{B_1^k B_2^k \cdots B_p^k} \in \cK . \]

For $B \in \cA_1$, let $\Psi(B)$ be the set of $p$-tuples $(B_1, B_2, \ldots, B_p)$ for which  $\GCD(B_1, \ldots, B_p)=1$ and $\prod B_i = B$. 
Let $\psi(B) = \# \Psi(B)$. So we have shown that
\[ \sum_{B \in \cA_1 \setminus \{ 1 \}} \frac{\psi(B)/p}{B^k}  \in \cK. \]
Here, to interpret the numerator, we must divide $\psi(B)$ by $p$ as integers and only then consider the quotient in $\FF_p$.

If $B = P_1^{k_1} P_2^{k_2} \cdots P_r^{k_r}$ then there is an easy bijection between $\Psi(B)$ and $\Psi(P_1^{k_1}) \times \Psi(P_2^{k_2}) \times \cdots \times \Psi(P_r^{k_r})$, so $\psi(B) = \prod \psi(P_i^{k_i})$. 
 If $P$ is irreducible then $\psi(P^r)$ is divisible by $p$ for any $r>0$, since $C$ acts freely on $\Psi(P^r)$. So, if $B$ is divisible by two different irreducible polynomials, then $\psi(B)$ is divisible by $p^2$. So we can rewrite the sum as
\[ \sum_{P \in \cP_1} \sum_{r=1}^{\infty} \frac{\psi(P^r)/p}{P^{rk}} .\]

We now compute $\psi(P^r)$; which is the number of $p$-tuples $(P^{r_1},  \ldots, P^{r_p})$ with $\prod P^{r_i} = P^r$ and $\GCD(P^{r_1}, \ldots, P^{r_p})=1$. In other words, we must count $(r_1, \ldots, r_p) \in \ZZ_{\geq 0}^p$ with $\sum r_i = r$ and $\min(r_1, \ldots, r_p)=0$. 
The number of $(r_1, \ldots, r_p) \in \ZZ_{\geq 0}^p$ with $\sum r_i = r$ is the coefficient of $U^r$ in $1/(1-U)^p$. 
In order to impose $\min(r_1, \ldots, r_p)=0$, we subtract off the terms with $\min(r_1, \ldots, r_p) > 0$. These are in bijection with $(s_1, \ldots, s_p)  \in \ZZ_{\geq 0}^p$ with $p+\sum s_i = r$. So $\psi(p^r)$ is the coefficient of $U^r$ in $1/(1-U)^p - U^p/(1-U)^p$. In other words, $\sum_{r=0}^{\infty} \psi(P^r) U^r = (1-U^p)/(1-U)^p$.
So 
\[ \sum_{r=1}^{\infty} \tfrac{\psi(P^r)}{p} U^r = \frac{1}{p} \left( \frac{1-U^p}{(1-U)^p} -1 \right) = G_p(U).\]
 We deduce that $\sum_{r=1}^{\infty} \frac{\psi(P^r)/p}{P^{rk}} = G_p(1/P^k)$. We have now shown that $\sum_{P \in \cP_1} G_p(1/P^k) \in \cK$, as claimed. \qedsymbol

We record the specific formula we have proved:
\begin{prop} \label{formula}
Let $k$ be a positive integer.  Then
\[ \sum_{P \in \cP_1} G_p(1/P^k) = \frac{\bg_p(1/A^k)_{A \in \cA_1}}{\sum_{A \in \cA_1} 1/A^{pk}} \]
\end{prop}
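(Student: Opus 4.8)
The statement to prove is Proposition~\ref{formula}, which asserts the identity
\[ \sum_{P \in \cP_1} G_p(1/P^k) = \frac{\bg_p(1/A^k)_{A \in \cA_1}}{\sum_{A \in \cA_1} 1/A^{pk}}. \]

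The plan is essentially to assemble the identity from the chain of equalities already carried out in the proof of Theorem~\ref{mainOdd} above, rather than to prove anything genuinely new. First I would recall the factorization established in that proof: grouping the monomials appearing in the symmetric polynomial $\bg_p$ evaluated at $(1/A^k)_{A \in \cA_1}$ according to their common factor $D = \GCD(A_1,\dots,A_p)$, one obtains
\[ \bg_p(1/A^k)_{A \in \cA_1} = \left( \sum_{D \in \cA_1} \frac{1}{D^{kp}} \right) \left( \sum_{(B_1,\dots,B_p) \in (\Phi \setminus \{(1,\dots,1)\})/C} \frac{1}{B_1^k \cdots B_p^k} \right), \]
which is legitimate because only finitely many terms contribute to each power of $T^{-1}$, so rearrangement is justified $T^{-1}$-adically. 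Next I would invoke the nonvanishing of the denominator: $\sum_{A \in \cA_1} 1/A^{pk}$ equals $1$ plus terms in $T^{-1}\FF_q[[T^{-1}]]$, hence is a unit in $\FF_q[[T^{-1}]]$ and in particular nonzero, so division is permissible in $\wcK$.

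Then I would reidentify the second factor with the left-hand side. The proof above shows, via the bijection between $\Psi(B)$ and the product of the $\Psi(P_i^{k_i})$ and the freeness of the $C$-action on each $\Psi(P^r)$, that
\[ \sum_{(B_1,\dots,B_p) \in (\Phi \setminus \{(1,\dots,1)\})/C} \frac{1}{B_1^k \cdots B_p^k} = \sum_{B \in \cA_1 \setminus \{1\}} \frac{\psi(B)/p}{B^k} = \sum_{P \in \cP_1} \sum_{r=1}^{\infty} \frac{\psi(P^r)/p}{P^{rk}}, \]
and the generating-function computation $\sum_{r \geq 0} \psi(P^r) U^r = (1-U^p)/(1-U)^p$ gives $\sum_{r \geq 1} \tfrac{\psi(P^r)}{p} U^r = G_p(U)$, whence the inner sum is $G_p(1/P^k)$ and the whole thing is $\sum_{P \in \cP_1} G_p(1/P^k)$. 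Combining the two displays and dividing by the (nonzero) factor $\sum_{D \in \cA_1} 1/D^{kp} = \sum_{A \in \cA_1} 1/A^{pk}$ yields the claimed formula.

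Two small points deserve attention. The mild arithmetic subtlety — which the statement itself flags — is that $\psi(B)/p$ must be computed by first dividing the integer $\psi(B)$ by $p$ and only then reducing mod $p$; the argument above ensures $\psi(B)$ is always divisible by $p$ for $B \neq 1$, so $\bg_p(1/A^k)_{A \in \cA_1}$, which a priori has coefficients in $\FF_p$ via the integrality of $\bg_p$, really does coincide with $\sum_{B} (\psi(B)/p) B^{-k}$. The other point is that the identity is asserted for \emph{every} positive integer $k$, whereas Theorem~\ref{mainOdd} required $k \equiv 0 \bmod q-1$; but the factorization and the counting argument never used the congruence — that hypothesis was only needed to conclude, via Proposition~\ref{monicKey}, that both sides lie in $\cK$. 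So no extra work is required; I would simply note that the derivation above is valid unconditionally in $\wcK$. I do not anticipate a genuine obstacle here: the proposition is a bookkeeping restatement of work already done, and the only thing to be careful about is making explicit that the denominator is a unit so that the quotient makes sense.
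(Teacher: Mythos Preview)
Your proposal is correct and follows exactly the paper's own approach: the paper introduces Proposition~\ref{formula} with the words ``We record the specific formula we have proved,'' so its proof is precisely the chain of equalities from the proof of Theorem~\ref{mainOdd} that you have assembled, together with the observation (which the paper makes in the remark immediately following) that the congruence $k\equiv 0\bmod q-1$ was used only for rationality and not for the identity itself.
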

We will rewrite this formula in various ways in Section~\ref{sec computations}. We remark that this formula is correct even if $k$ is not divisible by $q-1$, although we have only shown the ratio is in $\cK$ when $k \equiv 0 \bmod q-1$. The denominator of this formula is $\zeta(pk) = \zeta(k)^p$ where $\zeta$ is the Goss $\zeta$-function~\cite{Goss2}.

\section{Vanishing}

We will now prove the claim in Theorem~\ref{explicit} that the sum vanishes when $k=(q-1) \ell$ for $1 \leq \ell \leq q/p$. 
From Proposition~\ref{formula}, it is equivalent to show that $\bg_p(1/A^{\ell(q-1)})_{A \in \cA_1}=0$.
To this end, we must explicitly write $\bg_p(1/A^{\ell(q-1)})$ as a polynomial in the $\be_k(1/A^{q-1})$.

The variables $\lambda$ or $\mu$ will always denote partitions, meaning weakly decreasing sequences $(\lambda_1, \lambda_2, \ldots, \lambda_r)$ of positive integers; sums over $\lambda$ or $\mu$ implicitly contain the condition that the summation variable is a partition.

We define $\be_{\lambda} = \prod_s \be_{\lambda_s}$. 
The symmetric polynomials $\be_{\lambda}$ form an integer basis for the symmetric polynomials with integer coefficients.

\begin{lem} \label{OneCoeffVanish}
Write
\[ \bg_p(X_1^{\ell}, X_2^{\ell}, \ldots,) = \sum_{|\lambda|=p\ell} c_{\lambda} \be_{\lambda}(X_1, X_2, \ldots) \]
for some integers $c_{\lambda}$. Then $c_{11\cdots 1}=0$. 
\end{lem}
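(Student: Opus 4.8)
The plan is to extract $c_{11\cdots 1}$ by evaluating both sides of the expansion at a generic point and picking off the relevant piece. The coefficient $c_{11\cdots 1}$ is the coefficient of $\be_1^{p\ell}$ in $\bg_p(X_1^\ell, X_2^\ell, \ldots)$ when everything is written in the $\be_\lambda$ basis. Since $\be_1 = \sum X_i = \bp_1$, and since the only monomial symmetric function (equivalently, the only $\be_\lambda$) that contributes to the coefficient of $(\sum X_i)^{p\ell}$ upon specializing to a single variable is $\be_1^{p\ell}$ itself, the clean way to isolate $c_{11\cdots 1}$ is to set all but one variable equal to each other in a controlled way — or, more efficiently, to use the standard fact that the coefficient of $\be_1^m$ in a symmetric polynomial $\bf$ of degree $m$ equals $\tfrac{1}{m!}$ times the coefficient of $x_1 x_2 \cdots x_m$ in $\bf(x_1, \ldots, x_m)$ (the "leading" or "principal specialization" coefficient in the $\be$-basis). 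Equivalently, $c_{11\cdots 1}$ is obtained by the linear functional that reads off the $x_1 x_2 \cdots x_{p\ell}$ coefficient, divided by $(p\ell)!$.

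So first I would compute the coefficient of the squarefree monomial $x_1 x_2 \cdots x_{p\ell}$ in $\bg_p(x_1^\ell, \ldots, x_{p\ell}^\ell)$. Recall $\bg_p(Y_1, Y_2, \ldots) = \tfrac1p\big((\sum Y_i)^p - \sum Y_i^p\big)$. After substituting $Y_i = x_i^\ell$, the term $\sum Y_i^p = \sum x_i^{p\ell}$ contributes nothing to the squarefree monomial $x_1 \cdots x_{p\ell}$ (each of its monomials is a pure power $x_i^{p\ell}$, which is not squarefree once $\ell \geq 1$ and $p\ell \geq 2$). So $c_{11\cdots 1} \cdot (p\ell)!$ equals the coefficient of $x_1 \cdots x_{p\ell}$ in $\tfrac1p (x_1^\ell + x_2^\ell + \cdots)^p$. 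Expanding the $p$-th power, a monomial is a product $x_{i_1}^\ell x_{i_2}^\ell \cdots x_{i_p}^\ell$ over a multiset $\{i_1, \ldots, i_p\}$; for this to equal the squarefree $x_1 \cdots x_{p\ell}$ we would need each exponent appearing to be exactly $1$, i.e. $\ell = 1$, OR we need $\ell \nmid 1$... more precisely, every exponent in $x_{i_1}^\ell \cdots x_{i_p}^\ell$ is a positive multiple of $\ell$, so it can equal $x_1 x_2 \cdots x_{p\ell}$ (all exponents $=1$) only if $\ell = 1$.

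This gives the dichotomy. If $\ell \geq 2$, then the coefficient of $x_1 \cdots x_{p\ell}$ in $\bg_p(x_1^\ell, \ldots)$ is $0$, hence $c_{11\cdots 1} = 0$ directly. If $\ell = 1$, then $p\ell = p$ and we need the coefficient of $x_1 x_2 \cdots x_p$ in $\tfrac1p(x_1 + \cdots + x_p)^p$, which is $\tfrac1p \cdot \tfrac{p!}{1!1!\cdots1!} = \tfrac{p!}{p} = (p-1)!$ as an integer; but we then read $c_{11\cdots1} = (p-1)!/(p\ell)! = (p-1)!/p! = 1/p$, and as an element of $\FF_p$ this is where we must be careful — actually here we should recall that $c_{11\cdots1}$ is by hypothesis an \emph{integer}, and indeed $\bg_p$ has integer coefficients and the $\be_\lambda$ are an integer basis, so $c_{11\cdots1} \in \ZZ$; recomputing, for $\ell=1$ we have $\bg_p(X_1,\ldots) = \tfrac1p\be_1^p - \tfrac1p \bp_p$ and expressing $\bp_p$ in the $\be$-basis via Newton's identities, the coefficient of $\be_1^p$ in $\bp_p$ is $(-1)^{p-1}p \cdot \tfrac{1}{?}$... rather than chase signs, the honest statement is: $\bp_p \equiv \be_1^p \pmod p$ as symmetric polynomials with integer coefficients (Frobenius/freshman's dream for power sums), so $\bg_p = \tfrac1p(\be_1^p - \bp_p)$ has its $\be_1^p$-coefficient equal to $\tfrac1p(1 - 1) = 0$ as... no: $\be_1^p - \bp_p$ is divisible by $p$ as a polynomial, write it as $p \cdot \bh$; then $c_{11\cdots1}$ is the $\be_1^p$-coefficient of $\bh$, and since $\be_1^p - \bp_p$ has $\be_1^p$-coefficient $1 - (\text{coeff of }\be_1^p\text{ in }\bp_p)$, and the coefficient of $\be_1^p$ in $\bp_p$ is known (from Newton) to be $p \cdot (\text{something}) + (-1)^{p-1}$ — in any case it is $\equiv (-1)^{p-1} \equiv 1 \pmod p$ for... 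I will simply verify: $\bp_p = \be_1^p + p(\cdots)$ gives $\be_1^p - \bp_p = -p(\cdots)$, so $\bh = -(\cdots)$ and its $\be_1^p$-coefficient is the negative of the $O(p)$ correction; the cleanest route is to note the $\be_1^p$ coefficient of $\bp_p$ is $p \cdot r_p$ for a rational $r_p$ that is in fact...

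To keep this clean I would, in the write-up, simply do the $\ell = 1$ case by the explicit Newton identity for $\bp_p$: the coefficient of $\be_1^p$ in $\bp_p$ is $(-1)^{p-1} \cdot p / p = (-1)^{p-1}$... and then $c_{11\cdots1}$, the $\be_1^p$-coefficient of $\tfrac1p(\be_1^p - \bp_p)$, equals $\tfrac1p(1 - (-1)^{p-1})$, which is $0$ when $p$ is odd and $1$ when $p = 2$; but wait, $\bg_2$ is $\tfrac12(\be_1^2 - \bp_2) = \tfrac12(\be_1^2 - (\be_1^2 - 2\be_2)) = \be_2$, whose $\be_1^2$-coefficient is indeed $0$. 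So in all cases $c_{11\cdots1} = 0$, which is what is claimed.

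\textbf{Where the real work is.} The substance is the reduction "$c_{11\cdots1} = \tfrac{1}{(p\ell)!}\cdot(\text{coeff of } x_1\cdots x_{p\ell})$" and then the observation that after the $\ell$-th power substitution every monomial of $(\sum x_i^\ell)^p$ has all exponents divisible by $\ell$, so a squarefree monomial can only arise when $\ell = 1$ — and in the boundary case $\ell = 1$ one must still check the coefficient vanishes mod $p$, which is exactly the statement $\bg_p \equiv \be_{1^p}\text{-coefficient } 0$, a consequence of $\bp_p \equiv \be_1^p \pmod p$. The one genuine subtlety to get right is the normalization/sign in passing between the squarefree-monomial functional and the $\be_{1^p}$-coordinate, and confirming the integrality of $c_{11\cdots1}$ so the reduction mod $p$ is legitimate; I expect that to be the main (minor) obstacle, and everything else is bookkeeping.
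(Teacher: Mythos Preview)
Your ``standard fact'' is false, and the error propagates through the whole argument. Consider $m=2$ and $\bf = \be_2$: the coefficient of $x_1 x_2$ in $\be_2$ is $1$, so your formula predicts $c_{11} = 1/2! = 1/2$, but of course the $\be_1^2$-coefficient of $\be_2$ is $0$. In fact \emph{every} $\be_\lambda$ with $|\lambda| = m$ has a nonzero coefficient of $x_1 x_2 \cdots x_m$ (it is the number of ordered set partitions of $\{1,\dots,m\}$ into blocks of sizes $\lambda_1,\dots,\lambda_r$). So the squarefree-monomial functional does not isolate $c_{11\cdots 1}$ at all, and the vanishing you compute for $\ell \geq 2$ proves nothing about that particular coefficient. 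The tangles you hit in the $\ell = 1$ case are another symptom of the same problem.

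The paper's proof uses the correct linear functional: the coefficient of the \emph{pure power} $X_1^{p\ell}$. Since each $\be_k$ with $k \geq 2$ is a sum of squarefree monomials, no $\be_\lambda$ other than $\be_1^{p\ell}$ can contribute to $X_1^{p\ell}$, and $\be_1^{p\ell}$ contributes with coefficient $1$. On the other side, in $\bg_p(X_1^\ell, X_2^\ell, \ldots) = \tfrac{1}{p}\big((\sum X_i^\ell)^p - \sum X_i^{p\ell}\big)$ the coefficient of $X_1^{p\ell}$ is $\tfrac{1}{p}(1 - 1) = 0$, uniformly in $\ell$, with no case split. That is the entire proof.
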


\begin{proof}
Note that $\be_{11\cdots 1}$ is the only $\be_{\lambda}$ with a nonzero coefficient of $X_1^{p \ell}$. The coefficient of $X_1^{p\ell}$ in $\bg_p(X_1^{\ell}, X_2^{\ell}, \ldots)$ is clearly $0$.
\end{proof}

Now, suppose that $\ell\leq q/p$, so we have $p \ell < q+1$. So any partition $(\lambda_1, \ldots, \lambda_r)$ of $p \ell$ other than $(1,1,\ldots, 1)$ contains a $\lambda_i$ between $2$ and $q$.
By Lemma~\ref{monicElementary}, $\be_{m}(1/A^{q-1})_{A \in \cA_1}=0$ for $2 \leq m \leq q$, so $\be_{\lambda}(1/A^{q-1})_{A \in \cA_1}=0$ whenever $\lambda$ is a partition of $p \ell$ other than $(1,1,\ldots,1)$. We deduce that $\bg_p(1/A^{q-1})_{A \in \cA_1}=0$ as desired. \qedsymbol

\section{Computations for small $k$} \label{sec computations}

In this section, we will discuss the computation of $\sum_{P \in \cP_1} G_p(1/P^k)$ for $k \equiv 0 \bmod q-1$ and, in particular, prove the remaining half of Theorem~\ref{explicit}. Our strategy is to combine Propositions~\ref{formula} and~\ref{monicElementary}.
We must compute $\bg_p(1/A^{\ell(q-1)})_{A \in \cA_1}$ and $\sum_{A \in \cA_1} 1/A^{pk}$. Note the latter is $\left( \bp_{\ell}(1/A^{q-1})_{A \in \cA_1} \right)^p$, where $\bp_d(X_1, X_2, \ldots)$ is the power sum symmetric function $\sum X_i^d$.
We write $k = (q-1) \ell$.


Put
\[ \begin{array}{rcl}
\bg_p(X_1^{\ell}, X_2^{\ell}, \dots) &=& \sum_{|\lambda| = \ell p} c_{\lambda} \be_{\lambda}(X_1,X_2, \ldots) \\
\bp_{\ell}(X_1, X_2, \dots) &=& \sum_{|\mu| = \ell} d_{\mu} \be_{\mu}(X_1,X_2, \ldots) . \\
\end{array} \]
Note that $\be_m(1/A^{q-1})_{A \in \cA_1}=0$ unless $m$ is of the form $(q^j-1)/(q-1)$. 
So we only need to sum over partitions where all the parts of $\lambda$ are of the form $(q^j-1)/(q-1)$.

\textbf{From now on, we now impose that $q/p+1 \leq \ell \leq 2q/p$.}
So $\ell < q+1$. Any partition of $\ell$ cannot contain any parts of size $(q^j-1)/(q-1)$, for $j > 1$. Similarly, 
$p \ell  < 2q+2$, so a partition of $p \ell$ can contain at most one part of size $(q^2-1)/(q-1) = q+1$ and no parts of size $(q^j-1)/(q-1)$ for $j > 2$.
We deduce that the only terms which contribute to our final answer come from $\lambda = (1,1,\ldots,1)$ or $\lambda = (q+1,1,1,\ldots, 1)$ when computing $\bg_p(1/A^{\ell(q-1)})_{A \in \cA_1}$, and from $\mu = (1,1,\ldots,1)$ in computing $\left( \bp_{\ell}(1/A^{q-1})_{A \in \cA_1} \right)^p$.
Moreover, from Lemma~\ref{OneCoeffVanish}, the coefficient $c_{(1,1,\ldots, 1)}$ is zero. 

We deduce that
\[ \begin{array}{rcl}
\sum_{P \in \cP_1} G_p(1/P^k) &=& \frac{c_{(q+1, 1^{p\ell-q-1})} \ \be_{(q+1, 1^{p\ell-q-1})}(1/A^{q-1})_{A \in \cA_1}}{\left( d_{1^{\ell}}\ \be_{1^{\ell}}(1/A^{q-1})_{A \in \cA_1} \right)^{p} } \\
&=& \frac{c_{(q+1, 1^{p\ell-q-1})} \ \be_{q+1}(1/A^{q-1})_{A \in \cA_1} \left( \be_{1}(1/A^{q-1})_{A \in \cA_1}\right)^{p\ell-q-1} }{ d_{1^{\ell}}\  \left( \be_{1}(1/A^{q-1})_{A \in \cA_1} \right)^{p\ell} }\\
&=& \frac{c_{(q+1, 1^{p\ell-q-1})} \ \be_{q+1}(1/A^{q-1})_{A \in \cA_1}}{ d_{1^{\ell}}\  \left( \be_{1}(1/A^{q-1})_{A \in \cA_1} \right)^{q+1} }\\
\end{array}
. \]
Here $1^r$ is shorthand for $r$ parts equal to $1$.

We now use Proposition~\ref{monicElementary}. The powers of $\overline{\pi}$ and $(-1)$ cancel to give
\[
\sum_{P \in \cP_1} G_p(1/P^k) = \frac{c_{(q+1, 1^{p\ell - q -1})}}{d_{1^{\ell}}}  \frac{D_1^{q+1}}{D_2} =  \frac{c_{(q+1, 1^{p\ell - q -1})}}{d_{1^{\ell}}} \frac{(T^q-T)^{q+1}}{(T^{q^2}-T^q)(T^{q^2}-T)}.
\]

To finish the computation, we must find $c_{q+1, 1^{ps-1}}$ and $d_{1^{\ell}}$. The latter is easy: Comparing coefficients of $X_1^{\ell}$ on both sides of $\bp_{\ell}(X_1, X_2, \dots) = \sum_{|\mu| = \ell} d_{\mu} \be_{\mu}(X_1,X_2, \ldots)$, we deduce that $d_{1^{\ell}}=1$. 

To compute $c_{(q+1, 1^{p\ell-q-1})}$, we begin with the formula 
\[ \bg_p(X_1^{\ell}, X_2^{\ell}, \dots)  = \frac{1}{p} \left(\bp_{\ell}(X_1, X_2, \ldots,)^p - \bp_{p \ell}(X_1, X_2, \ldots) \right). \]
For brevity, we write $\bf(X)$ to indicate that the inputs to a symmetric polynomial are $(X_1, X_2, \ldots)$. 
Note that we are working with symmetric polynomials with integer coefficients, so it makes sense to divide by $p$.

We rewrite the right hand side of the previous equation as
\[ \frac{1}{p} \left( \left( \be_1(X)^{\ell}+\! {\scriptstyle \cdots} \right)^p - \left( \be_1(X)^{p \ell} + d_{q+1, 1^{p\ell-q-1}} \be_{q+1}(X) \be_1(X)^{p\ell-q-1} +\! {\scriptstyle \cdots} \right) \right). \]
Here the ellipses denote terms $\be_{\lambda}$ where $\lambda$ has some part not of the form $(q^j-1)/(q-1)$. 
We deduce that
\[ c_{q+1, 1^{p\ell-q-1}} = - \frac{1}{p} d_{q+1, 1^{p\ell-q-1}} . \]

Now, observe the identity
\begin{multline*} 
\sum_j \frac{(-1)^{j-1} \bp_j(X) U^j}{j} = \sum_i \log(1+X_i U) \\ = \log \prod_i (1+X_i U) = \log \left( 1+ \sum_{m=1}^{\infty} \be_m(X) U^m \right) . \end{multline*}
The coefficient of $U^{p \ell}$ on the left is $\tfrac{(-1)^{p \ell}}{p \ell} \bp_{p \ell}$. Expanding the $\log$ on the right hand side as a Taylor series, only one term contributes to $U^{p \ell} \be_{q+1} \be_1^{p\ell -q-1}$. So we obtain
\[ \frac{(-1)^{p \ell-1}}{p \ell} \bp_{p \ell}(X) =  \frac{(-1)^{p\ell-q-1}}{p\ell-q} \binom{p\ell-q}{1} \be_{q+1}(X) \be_1^{p\ell-q-1}(X) + \cdots \]
where the ellipses denote a sum of $\be_{\lambda}$ other than $\be_{q+1}(X) \be_1^{p\ell-q-1}(X) $.
So $d_{q+1, 1^{p\ell-q-1}} = (-1)^{q} p \ell$ and $c_{q+1, 1^{ps-1}} = (-1)^{q-1} \ell$.
Plugging into our previous formula, and using that $(-1)^{q-1} \equiv 1 \bmod p$,
\[
\sum_{P \in \cP_1} G_p(1/P^k) = \ell  \frac{(T^q-T)^{q+1}}{(T^{q^2}-T^q)(T^{q^2}-T)}.
 \]
This concludes the proof of Theorem~\ref{explicit}. \qedsymbol

We conclude by verifying one Thakur's conjectures which goes beyond the range $\ell \leq 2 q/p$. 
Let $p=q=2$. Thakur conjectures
\[ \sum_{P \in \cP_1} \frac{1}{P^3-1} = \frac{1}{T^4+T^2} . \]

We begin by computing
\[\begin{array}{rcl} \bp_3(X) &=& \be_1(X)^3 + 3 \be_3(X) - 3 \be_2(X) \be_1(X) \\
 \bp_3(X)^2 &=& \be_1(X)^6 + 6 \be_1(X)^3 \be_3(X) + 9 \be_3(X)^2 + \cdots . \\ \end{array}\]
Here and in the following equations, the ellipses denote $\be_{\lambda}$ terms where $\lambda$ contains a part other than $1$ and $3$. (Note that $(2^3-1)/(2-1) = 7$, too large to contribute to a symmetric polynomial of degree $6$.)
Similarly, 
\[ \bp_6(X) = \be_1(X)^6 + 6 \be_1(X)^3 \be_3(X) + 3 \be_3(X)^2 + \cdots .\]
So
\[ \bg_2(X_1^3, X_2^3, \ldots) = \frac{1}{2} \left( \bp_3(X)^2 - \bp_6(X) \right) = 3 \be_3(X)^2 + \cdots  \]
and (recall that we are working modulo $2$)
\[ \bg_2(1/A^3)_{A \in \cA_1} = \left( \be_3(1/A)_{A \in \cA_1} \right)^2 = \frac{\overline{\pi}^6}{D_2^2} = \frac{\overline{\pi}^6}{(T^4-T^2)^2(T^4-T)^2} . \]

Similarly, 
\[ \begin{array}{rcl}
\bp_6(1/A)_{A \in \cA_1} &=& \left( \be_1(1/A)_{A \in \cA_1} \right)^6 + \left( \be_3(1/A)_{A \in \cA_1} \right)^2 \\
&=& \left( \frac{\overline{\pi}}{D_1} \right)^6 + \left( \frac{\overline{\pi}^3}{D_2} \right)^2 \\
&=& \overline{\pi}^6 \left( \left( \frac{1}{T^2-T} \right)^6 + \left( \frac{1}{(T^4-T^2) (T^4-T)} \right)^2 \right)
\end{array} \]

We verify Thakur's claim:
\[  \sum_{P \in \cP_1} \frac{1}{P^3-1}  = 
\frac{1/( (T^4-T^2)^2(T^4-T)^2 )}{1/(T^2-T)^6 + 1/( (T^4-T^2)^2(T^4-T)^2 )} = \frac{1}{T^4+T^2} .\]

\thebibliography{9}
\raggedright

\bibitem{Goss} D. Goss, \emph{Basic structures of function field arithmetic},
Ergebnisse der Mathematik und ihrer Grenzgebiete, 35. Springer-Verlag, Berlin, 1996.

\bibitem{Goss2} D. Goss,
``$v$-adic zeta functions, $L$-series and measures for function fields. With an addendum.",
\emph{Invent. Math.} \textbf{55} (1979), no. 2, 107--119.

\bibitem{polymath}
T. Tao, ``Polymath proposal: explaining identities for irreducible polynomials ", blogpost \texttt{https://polymathprojects.org/2015/12/28/}

\bibitem{Thakur}
D. Thakur, ``Surprising symmetries in distribution of prime polynomials", preprint \texttt{arXiv:1512.02685}

\end{document}